\newtheorem{thm}{Theorem}
\newtheorem{dfn}[thm]{Definition}
\newtheorem{lem}[thm]{Lemma}
\newtheorem{rmk}{Remark}
\renewcommand{\d}{{\mathrm{d}}}
\newcommand{\R}{{\mathbb{R}}}
\title{Symplecticity of coupled Hamiltonian systems}
\author{Shunpei Terakawa\\
    Graduate School of System Informatics\\
    Kobe University\\
    Kobe, Hyogo, Japan\\
	\texttt{s-terakawa@stu.kobe-u.ac.jp} \\
	\And
	Takaharu Yaguchi\\
	Graduate School of System Informatics\\
    Kobe University\\
    Kobe, Hyogo, Japan\\
	\texttt{yaguchi@pearl.kobe-u.ac.jp} \\
}
\begin{document}
\maketitle

\begin{abstract}
	We derived a condition under which a coupled system consisting of two finite-dimensional Hamiltonian systems becomes a Hamiltonian system.
In many cases, an industrial system can be modeled as a coupled system of some subsystems.
Although it is known that symplectic integrators are suitable for discretizing Hamiltonian systems,
the composition of Hamiltonian systems may not be Hamiltonian.
In this paper, 
focusing on a property of Hamiltonian systems, that is, the conservation of the symplectic form, we provide a condition under which two Hamiltonian systems coupled with interactions compose a Hamiltonian system.
\end{abstract}

\keywords{coupled system\and Hamiltonian system\and symplectic integrator\and interaction}

\section{Introduction}
Because many industrial objects are described as coupled systems,
it is important to investigate the properties of systems composed of subsystems that are modeled separately.
For example, in the physical simulation of a piano, it is necessary to consider a model in which the parts described by different governing equations, such as strings, hammers, bridges, and soundboard, are combined by interaction \cite{Chabassier2013}.
In general, numerical simulations are necessary to study such systems;
however, if the coupled system under investigation is large and/or requires a long-term prediction, it may be difficult to compute numerical solutions with general-purpose numerical methods.

For certain kinds of systems that are difficult to solve by general-purpose methods, structure-preserving numerical methods have been studied \cite{GIBook}. 
However, the overall structure of coupled systems consisting of different equations can be complicated due to the differences in the properties of the individual subsystems and the effects of the way of coupling. Thus 
theoretical investigations of the structures of coupled systems are required.

In this study, we consider coupled systems, especially those which consist of Hamiltonian systems as their subsystems.
For Hamiltonian systems, symplectic integrators are known to be efficient~\cite{ReichBook}. 
These methods are based on the conservation of the symplectic form of the Hamiltonian system and have good properties such as bounded energy variation and discrete versions of various conservation laws. 
Therefore, if the coupled system is a Hamiltonian system, the symplectic integrators may be the best choice. 
However, even if subsystems are Hamiltonian systems, the coupled system may not be Hamiltonian. 
It was shown that a specific coupled Hamiltonian system composed of the wave equation and the elasticity equation is Hamiltonian \cite{Terakawa2020}.
The present study is a generalization of this result.

\section{Hamiltonian systems and the conservation of symplectic forms}

A Hamiltonian system is typically defined in the following way. 
\begin{dfn} \label{dfn:Hamiltonian_sys-coordinate}
    Let $(M, \omega)$ be a symplectic manifold and
    $z(t):t \mapsto z(t) \in M$ be the state variable of the system.
    If there exists a function $H(z)$ called Hamiltonian
    and a skew-symmetric matrix $S$ corresponding to the symplectic form $\omega$ 
    such that the time evolution of $z$ is represented in
    \begin{align}
        \frac{\d z}{\d t} = S \nabla H
        \label{eq:general-hamiltonian-sys}
        ,
    \end{align}
    the system is called a Hamiltonian system.
\end{dfn}

While the equation \eqref{eq:general-hamiltonian-sys} is typically employed, the same equation can be represented in the following coordinate-free form.

\begin{dfn}
    Let $(M,\omega)$ be a symplectic manifold.
    If $X$, the vector field of the system, satisfies 
    \begin{align*}
        i_X \omega = \d H
    \end{align*}
    for a function $H:M\to \R$ and a symplectic form $\omega$, 
    the system is called a Hamiltonian system and $X$ is called a Hamiltonian vector field, where $i_X$ is the interior product and $\d$ is the exterior derivative. 
\end{dfn}

This geometric representation can be used to determine whether a system is a Hamiltonian system or not.

\begin{dfn} \label{dfn:sympl}
    If a vector field $X$ on a symplectic manifold $(M, \omega)$ satisfies
    \begin{align}
		\mathcal{L}_X \omega = 0
		\label{eq:Lie0}
		,
	\end{align}
	where $\mathcal{L}_X$ is the Lie derivative with respect to $X$,
	then $X$ is said to be symplectic.
\end{dfn}

\begin{thm}
    Hamiltonian vector fields satisfy \eqref{eq:Lie0},
    and if a vector field satisfies \eqref{eq:Lie0} then it is at least locally a Hamiltonian vector field. 
\end{thm}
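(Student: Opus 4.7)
The plan is to invoke Cartan's magic formula, which expresses the Lie derivative of a differential form in terms of the exterior derivative and the interior product: for any vector field $X$ and any differential form $\alpha$, one has
\begin{align*}
    \mathcal{L}_X \alpha = i_X \d \alpha + \d(i_X \alpha).
\end{align*}
Applying this to the symplectic form $\omega$ and using the fact that $\omega$ is by definition closed ($\d \omega = 0$), I obtain the key identity $\mathcal{L}_X \omega = \d(i_X \omega)$. The whole proof will turn on this reduction.

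For the forward direction, I would assume that $X$ is a Hamiltonian vector field with Hamiltonian $H$, so that $i_X \omega = \d H$. Substituting into the identity above gives $\mathcal{L}_X \omega = \d \d H = 0$, since $\d^2 = 0$. This is the easy half.

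For the converse, I would suppose $\mathcal{L}_X \omega = 0$. The identity then forces $\d(i_X \omega) = 0$, i.e., the 1-form $i_X \omega$ is closed. At this point I would invoke the Poincar\'e lemma: on any contractible open neighborhood of a given point of $M$, every closed form is exact, so there exists a smooth function $H$ defined on that neighborhood such that $i_X \omega = \d H$. This exhibits $X$ as a Hamiltonian vector field on the neighborhood, which is precisely the local statement in the theorem.

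The only real subtlety is that the second half cannot generally be upgraded to a global statement: whether $i_X \omega$ is globally exact depends on the first de~Rham cohomology $H^1_{\mathrm{dR}}(M)$, and so a vector field satisfying \eqref{eq:Lie0} need not admit a globally defined Hamiltonian when $M$ has nontrivial topology. This is the reason for the word \emph{locally} in the statement, and is the point I would emphasize rather than any computational difficulty.
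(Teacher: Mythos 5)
Your proof is correct: the Cartan-formula reduction $\mathcal{L}_X\omega = \d(i_X\omega)$ together with $\d^2=0$ for one direction and the Poincar\'e lemma for the other is exactly the standard argument, and your observation that global exactness of $i_X\omega$ is obstructed by $H^1_{\mathrm{dR}}(M)$ correctly accounts for the word ``locally'' in the statement. The paper does not supply its own proof of this theorem (it defers to \cite{SymplecticTop}), but the identity $\mathcal{L}_X\omega=\d(i_X\omega)$ you rely on is the same one the paper itself uses later in the proof of Lemma~\ref{lem:Lie}, so your route is entirely consistent with the paper's methods.
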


For details, see \cite{SymplecticTop}. 

\section{Symplectic integrators}

Symplectic integrators are methods for discretizing symplectic flows while preserving their properties.

\begin{dfn}
    For local coordinates $(q_1, \ldots, q_m, p_1,$\\$ \ldots, p_m)$ and
	the standard symplectic form $\omega := \d q_1 \wedge \d p_1 + \cdots + \d q_m \wedge \d p_m$,
	let a vector field $X$ that defines its flow $\phi _t$ satisfy
		$\mathcal{L}_X \omega = 0$.
	Then a discretized $\phi _t$
	\begin{align*}
		\Phi \approx \left. \phi_t \right|_{t = \Delta t}
	\end{align*}
	such that
	\begin{align*}
		(
			q_1^{(n+1)},
			\ldots,
			q_m^{(n+1)},
			p_1^{(n+1)},
			\ldots,
			p_m^{(n+1)}
		)^{\top}
		\\
		=	\Phi\ 
			(
				q_1^{(n)},
				\ldots,
				q_m^{(n)},
				p_1^{(n)},
				\ldots,
				p_m^{(n)}
			)^{\top}
	\end{align*}
	and
	\begin{align*}
		\d q_1^{(n+1)} \wedge \d p_1^{(n+1)} + \cdots + \d q_m^{(n+1)} \wedge \d p_m^{(n+1)} \\
		= \d q_1^{(n)} \wedge \d p_1^{(n)} + \cdots + \d q_m^{(n)} \wedge \d p_m^{(n)} 
	\end{align*}
	is called a symplectic integrator.
\end{dfn}

A numerical solution by a symplectic integrator is considered to be symplectic in the following sense\cite{GIBook}.
The solution is a sequence of discrete points in space which can be regarded as points on a solution curve of a certain Hamiltonian equation defined by the symplectic integrator.
Such a curve is generally different from the solution $\phi_t$ of the original equation, but it is expected to preserve the symplectic form.


\section{Coupling with interaction terms}

In this study, we consider the following coupled Hamiltonian systems
    that consist of Hamiltonian systems $H_1$ and $H_2$ with interaction terms $f_1$ and $f_2$:
	\begin{align}
		\frac{\d}{\d t}
			\begin{pmatrix}
				q_1 \\ p_1 \\ q_2 \\ p_2
			\end{pmatrix}
		= 	\begin{pmatrix}
				O & I & O & O \\
				-I & O & O & O \\
				O & O & O & I \\
				O & O & -I & O 
			\end{pmatrix}
				\begin{pmatrix}
					\displaystyle \frac{\partial H_1}{\partial q_1} \\ \\[-0.8em]
					\displaystyle \frac{\partial H_1}{\partial p_1} \\ \\[-0.8em]
					\displaystyle \frac{\partial H_2}{\partial q_2} \\ \\[-0.8em]
					\displaystyle \frac{\partial H_2}{\partial p_2}
				\end{pmatrix}
			+	\begin{pmatrix}
					0 \\ f_1 \\ 0 \\ f_2
				\end{pmatrix}.
		\label{eq:general-coupled-sys}
	\end{align}
	The associated vector field $X$ is also defined as the right-hand side of \eqref{eq:general-coupled-sys}. Note that $q_1, q_2, p_1, p_2, f_1, f_2$ are not necessarily scalars; they can be vectors. 

The specific form of the interaction is problem-specific and it determines whether the coupled system is a Hamiltonian system or not.
If the system represented by \eqref{eq:general-coupled-sys} is transformed into \eqref{eq:general-hamiltonian-sys}, then the system is Hamiltonian; however, it is in general difficult to check whether such a transformation exists. This study provides conditions to determine whether the coupled system is a Hamiltonian system for a given interaction.

\section{Main result}

\begin{lem} \label{lem:Lie}
    Let the standard symplectic form $\omega$ of the system \eqref{eq:general-coupled-sys} be
	$
		\omega :=
			\d q_1 \wedge \d p_1
			+ \d q_2 \wedge \d p_2
			.
	$
    The Lie derivative $\mathcal{L}_X \omega$ of $\omega$ with respect to $X$ is
	\begin{align*}
		\mathcal{L}_X \omega
		=	\d f_1 \wedge \d q_1
			+ \d f_2 \wedge \d q_2
	    .
	\end{align*}
\end{lem}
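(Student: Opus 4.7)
The plan is to apply Cartan's magic formula, $\mathcal{L}_X\omega = i_X\d\omega + \d(i_X\omega)$. Since $\omega = \d q_1\wedge\d p_1 + \d q_2\wedge\d p_2$ has constant coefficients in these coordinates, it is closed, so $\d\omega = 0$ and the identity collapses to $\mathcal{L}_X\omega = \d(i_X\omega)$. This reduces the proof to two routine algebraic steps: compute the interior product, then take its exterior derivative.

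For the interior product, I would read off the four block components of $X$ from the right-hand side of \eqref{eq:general-coupled-sys} and apply the derivation rule $i_X(\d q_j\wedge\d p_j) = (i_X\d q_j)\,\d p_j - \d q_j\,(i_X\d p_j)$ block by block. The Hamiltonian parts of $X$ contribute $(\partial H_j/\partial p_j)\,\d p_j + (\partial H_j/\partial q_j)\,\d q_j$, which recombine via the chain rule into $\d H_j$, while the extra interaction entries sitting in the $\dot p_j$-slot leave behind a term proportional to $f_j\,\d q_j$. Collecting everything should yield an expression of the shape $i_X\omega = \d H_1 + \d H_2 + (\text{terms involving only } f_j \text{ and } \d q_j)$.

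Finally, I would apply $\d$. The exact pieces $\d H_1$ and $\d H_2$ are annihilated by $\d^2 = 0$, and the Leibniz rule gives $\d(f_j\,\d q_j) = \d f_j \wedge \d q_j$, so only the interaction pieces survive and reassemble (up to wedge-antisymmetry) into the formula stated in the lemma.

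The main obstacle is not conceptual but bookkeeping. One has to track the minus sign arising from the derivation rule $i_X(\d q \wedge \d p) = (i_X\d q)\d p - \d q(i_X\d p)$, and remember that $q_j, p_j, f_j$ may be vector-valued, so each expression such as $f_j\,\d q_j$ must be read componentwise as $\sum_k f_j^{(k)}\,\d q_j^{(k)}$. Once these conventions are pinned down, the identity falls out mechanically from Cartan's formula and the closedness of $\omega$.
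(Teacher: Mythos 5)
Your proposal is correct and follows essentially the same route as the paper's proof: Cartan's formula, closedness of $\omega$ to drop $i_X(\d\omega)$, a blockwise computation of $i_X\omega$, and an application of $\d$ under which the Hamiltonian contributions vanish (the paper verifies this by explicit cancellation of the mixed second-derivative terms, whereas you invoke $\d^2 H_j = 0$ directly, which is the same cancellation seen more compactly). The only blemish is a harmless sign slip: the Hamiltonian part of $i_X\omega$ is $-\d H_1 - \d H_2$ rather than $+\d H_1 + \d H_2$, which does not affect the conclusion.
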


\begin{proof}
    From the Cartan formula, it holds that
    \begin{align*}
        \mathcal{L}_X \omega = \d(i_X(\omega)) + i_X(\d \omega).
    \end{align*}
    $i_X(\d \omega)$ is always zero because $\omega$ is closed. 
    $i_X(\omega)$ in the first term of the right-hand side is
    \begin{align*}
        i_X (\omega)
		&=	\frac{\d p_1}{\d t} \d q_1 - \frac{\d q_1}{\d t} \d p_1
			+ \frac{\d p_2}{\d t} \d q_2 - \frac{\d q_2}{\d t} \d p_2
		\\
		&=	\left( - \frac{\partial H_1}{\partial q_1} + f_1 \right) \d q_1
			- \frac{\partial H_1}{\partial p_1} \d p_1
			+ \left( - \frac{\partial H_2}{\partial q_2} + f_2 \right) \d q_2
			- \frac{\partial H_2}{\partial p_2} \d p_2
	\end{align*}
	and hence
	\begin{align*}
		\d (&i_X (\omega)) \\
		&=	\d \left( - \frac{\partial H_1}{\partial q_1} + f_1 \right) \wedge \d q_1
			- \d \left( \frac{\partial H_1}{\partial p_1} \right) \wedge \d p_1
			+ \d \left( - \frac{\partial H_2}{\partial q_2} + f_2 \right) \wedge \d q_2
			- \d \left( \frac{\partial H_2}{\partial p_2} \right) \wedge \d p_2
		\\
		&=	\left(
				- \frac{\partial^2 H_1}{\partial^2 q_1} \d q_1
				- \frac{\partial^2 H_1}{\partial q_1 \partial p_1} \d p_1
				+ \d f_1 
			\right) \wedge \d q_1
			- \left(
				\frac{\partial^2 H_1}{\partial p_1 \partial q_1} \d q_1
				+ \frac{\partial^2 H_1}{\partial^2 p_1} \d p_1
			\right) \wedge \d p_1
			\\ &\quad + \left(
				- \frac{\partial^2 H_2}{\partial^2 q_2} \d q_2
				- \frac{\partial^2 H_2}{\partial q_2 \partial p_2} \d p_2
				+ \d f_2
			\right) \wedge \d q_2
			- \left(
				\frac{\partial^2 H_2}{\partial p_2 \partial q_2} \d q_2
				+ \frac{\partial^2 H_2}{\partial^2 p_2} \d p_2
			\right) \wedge \d p_2
		\\
		&= \d f_1 \wedge \d q_1 + \d f_2 \wedge \d q_2,
    \end{align*}
    which proves this lemma. 
\end{proof}
Lemma \ref{lem:Lie} gives the condition for a coupled system to preserve the symplectic form.

\begin{thm} \label{thm:main-result}
	If the coupled system referred to in Lemma \ref{lem:Lie} satisfies
	\begin{align*}
		\d f_1 \wedge \d q_1 + \d f_2 \wedge \d q_2 = 0,
	\end{align*}
	then it preserves the symplectic form $\omega$.
\end{thm}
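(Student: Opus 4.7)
The plan is to observe that the theorem is a direct corollary of Lemma~\ref{lem:Lie} together with the classical equivalence between the vanishing of $\mathcal{L}_X\omega$ and the invariance of $\omega$ under the flow of $X$. Since Lemma~\ref{lem:Lie} already carries out the substantive Cartan-formula computation, the remaining work is essentially a two-line translation.

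First I would apply Lemma~\ref{lem:Lie} to the coupled system~\eqref{eq:general-coupled-sys} to obtain $\mathcal{L}_X \omega = \d f_1 \wedge \d q_1 + \d f_2 \wedge \d q_2$. Invoking the hypothesis of the theorem then yields $\mathcal{L}_X \omega = 0$, so $X$ is symplectic in the sense of Definition~\ref{dfn:sympl}.

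Next I would promote this infinitesimal identity to a statement about the flow $\phi_t$ of $X$ by using the standard formula $\frac{\d}{\d t}\phi_t^* \omega = \phi_t^*(\mathcal{L}_X \omega)$. Because the right-hand side vanishes for every $t$, the pullback $\phi_t^* \omega$ is independent of $t$; combined with $\phi_0^* \omega = \omega$, this gives $\phi_t^* \omega = \omega$ for all $t$, which is the claim that the coupled system preserves the symplectic form.

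I do not anticipate any genuine obstacle: the entire analytical content has been absorbed into Lemma~\ref{lem:Lie}, and the only step that is not already at hand is the passage from the Lie-derivative formulation to preservation under the flow, which is a textbook fact. If the authors prefer not to introduce $\phi_t$ at all, an even terser argument is to note that by Definition~\ref{dfn:sympl} the vanishing $\mathcal{L}_X \omega = 0$ \emph{is} what it means for $X$ to preserve $\omega$, so the theorem reduces to rereading the hypothesis through Lemma~\ref{lem:Lie}.
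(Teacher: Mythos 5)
Your proposal is correct and follows the same route as the paper, whose entire proof is the single line ``It follows immediately from Lemma \ref{lem:Lie}.'' You merely make explicit the two steps the authors leave implicit---substituting the hypothesis into the Lie-derivative formula and invoking $\frac{\d}{\d t}\phi_t^*\omega = \phi_t^*(\mathcal{L}_X\omega)$ to pass from $\mathcal{L}_X\omega=0$ to invariance under the flow---which is a reasonable amount of added detail but not a different argument.
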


\begin{proof}
    It follows immediately from Lemma \ref{lem:Lie}.
\end{proof}

\begin{rmk}
An important fact seen from Theorem \ref{thm:main-result} is that the symplectic form for the coupled system is the direct sum of the symplectic forms for the subsystems. 
This modularity makes it easy to couple additional subsystems one after another.
\end{rmk}

\section{Numerical experiments} \label{sec:experiments}

In this section, we consider 
a composition of a simple elastic beam and a spring-mass system.
We determine the interaction without considering symplecticity, then verify it using Theorem \ref{thm:main-result}.

Let the coordinates be as illustrated in Fig. \ref{fig:schematic}. The 
equation of the beam is
\begin{align*}
    &\rho A u_{tt} = - EI u_{xxxx}, \\
    &u(0,t)=u(L,t)=u_{xx}(0,t)=u_{xx}(L,t)=0
    ,
\end{align*}
where $\rho$ is the density, $A$ is the cross-sectional area, $E$ is the elastic modulus and $I$ is the second moment of area. We suppose that all these quantities are constants.
\begin{figure}[tb]
    \centering
    \includegraphics[width=0.6\linewidth]{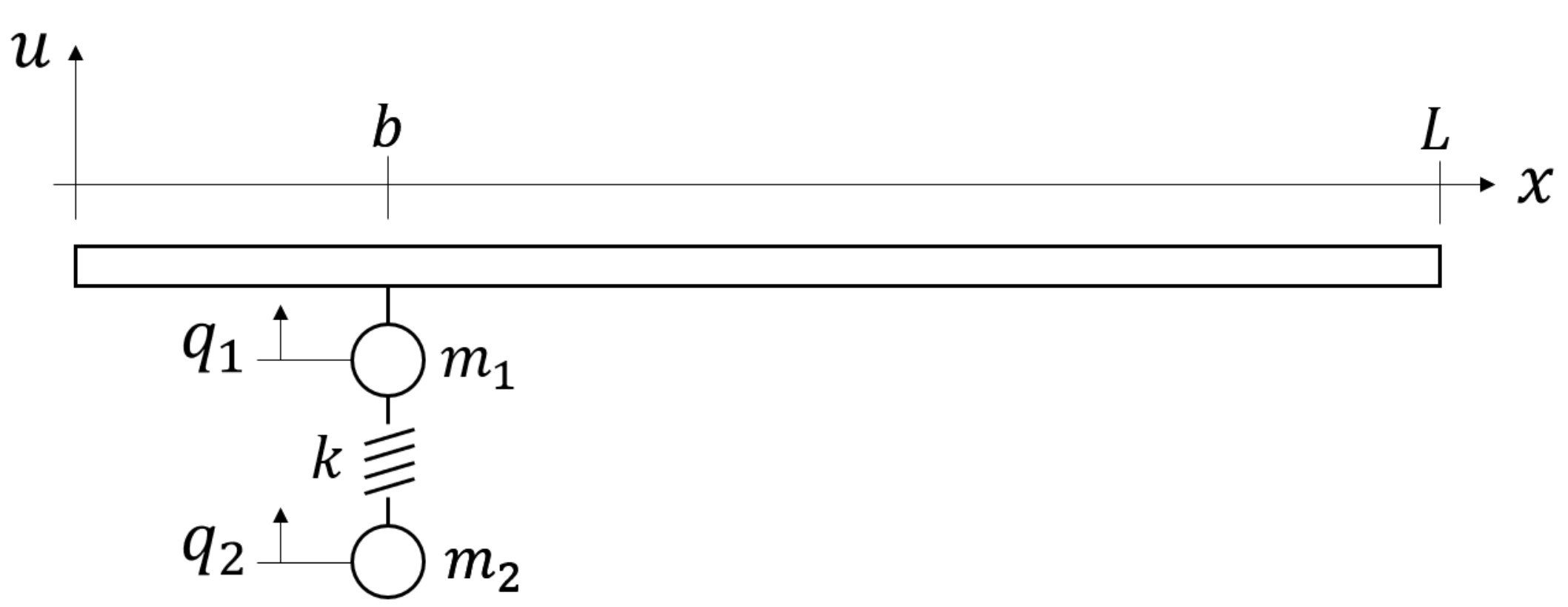}
    \caption{Schematic of the coupled system. It consists of a simple elastic beam and a mass-spring system, and a point on the beam and the mass $m_1$ are fixed together.}
    \label{fig:schematic}
\end{figure}

We consider 
a 
coupled system: 
\begin{gather*}
    \frac{\d}{\d t}
		\begin{pmatrix}
			u \\ v \\ q_1 \\ q_2 \\ p_1 \\ p_2
		\end{pmatrix}
	    = 	J
    		\begin{pmatrix}
    			v / \rho A \\
    			- EI u_{xxxx} \\
    			p_1 m_1 \\
    			p_2 m_2 \\
    			k (q_2 - q_1) \\
    			-k (q_2 - q_1)
    		\end{pmatrix}
		+	\begin{pmatrix}
				0 \\ f \delta(x - b) \\ 0 \\ 0 \\ - f \\ 0
			\end{pmatrix}
    ,
	\\
	J   =   \begin{pmatrix}
	            \begin{matrix}
	                0 & 1 \\
    			    -1 & 0 \\
	            \end{matrix}
    			& O \\
    			O &
    			\begin{matrix}
    			    O & I_2 \\
    			    -I_2 & O
    			\end{matrix}
    		\end{pmatrix}
    ,\quad
    I_2 =   \begin{pmatrix}
                1 & 0 \\
                0 & 1
            \end{pmatrix}
\end{gather*}
and suppose a corresponding discrete system is given as
\begin{align}
    &\frac{\d}{\d t}
		\begin{pmatrix}
			u_i \\ v_i \\ q_1 \\ q_2 \\ p_1 \\ p_2
		\end{pmatrix}
	    = 	J
    		\begin{pmatrix}
    		    v_i / \rho A \\
    			- EI \delta_4 u_i \\
    			p_1 m_1 \\
    			p_2 m_2 \\
    			k (q_2 - q_1) \\
    			-k (q_2 - q_1)
    		\end{pmatrix}
		+	\begin{pmatrix}
				0 \\ F_i \\ 0 \\ 0 \\ - f \\ 0
			\end{pmatrix}
	,
	\label{eq:disc-sys}
	\\
	&\delta_4 u_i = \frac{u_{i+2} - 4u_{i+1} + 6u_i - 4u_{i-1} + u_{i-2}}{\Delta x^4}, \notag
	\\
	&(1\leq i \leq N_x-1), \notag
	\\
	&u_0(t) = u_{N_x}(t) = 0, \label{eq:discrete-bc-displacement} \\
	&u_{-1}(t) = -u_1(t),\ u_{N_x-1}(t) = -u_{N_x}(t) \label{eq:discrete-bc-bending}
	.
\end{align}
The discrete boundary conditions \eqref{eq:discrete-bc-displacement} and \eqref{eq:discrete-bc-bending} are derived as follows.
\eqref{eq:discrete-bc-displacement} is understood as discretized $u(0,t)=u(L,t)=0$.
\eqref{eq:discrete-bc-bending} is derived from the discrete version of the boundary condition $u_{xx} = 0$, e.g., for $u_{xx}(0, t) = 0$, 
\begin{align}
    \frac{u_{-1} - 2u_0 + u_1}{\Delta x^2} = 0. \label{eq:disc-uxx=0}
\end{align}
The boundary condition \eqref{eq:discrete-bc-bending} is obtained by combining \eqref{eq:discrete-bc-displacement} and \eqref{eq:disc-uxx=0}. 

The two subsystems are coupled at $u_{n_b}$ on the semi-discretized beam and $q_1$ with an interaction vector:
\begin{align*}
    F_i = \begin{cases}
        f/\Delta x &(i=n_b) \\
        0 & (i\neq n_b)
    \end{cases}
    .
\end{align*}

Each subsystem is a Hamiltonian system under the following Hamiltonians:
\begin{align*}
    H_{\rm{el}}
    &=  \sum_{i=1}^{N_x-1} \frac{1}{2} \left(
            \frac{v_i^2}{\rho A} 
            + EI \left( \frac{u_{i+1} - 2u_i + u_{i-1}}{\Delta x^2} \right)^2
        \right) \Delta x
    ,
    \\
    H_{\rm{sp}}
    &=  \frac{p_1^2}{2m_1}
        + \frac{p_2^2}{2m_2}
        + \frac{k}{2}(q_2 - q_1)^2
    .
\end{align*}
Hence the discrete coupled system is written with these Hamiltonians:
\begin{gather*}
    \frac{\d}{\d t}
		\begin{pmatrix}
			u_i \\ v_i \\ q_j \\ p_j
		\end{pmatrix}
	= 	\begin{pmatrix}
			0 & 1 & 0 & 0 \\
			-1 & 0 & 0 & 0 \\
			0 & 0 & 0 & 1 \\
			0 & 0 & -1 & 0
		\end{pmatrix}
			\begin{pmatrix}
				\displaystyle \frac{\delta H_{\rm{el}}}{\delta u_i} \\ \\[-0.8em]
				\displaystyle \frac{\delta H_{\rm{el}}}{\delta v_i} \\ \\[-0.8em]
				\displaystyle \frac{\partial H_{\rm{sp}}}{\partial q_j} \\ \\[-0.8em]
				\displaystyle \frac{\partial H_{\rm{sp}}}{\partial p_j} \\ \\[-0.8em]
			\end{pmatrix}
		+	\begin{pmatrix}
				0 \\ F_i \\ 0 \\ f_j \\ 0
			\end{pmatrix}
    ,
    \\
	f_j
	=  \begin{cases}
            - f &(j=1) \\
            0 &(j=2)
        \end{cases}
\end{gather*}
where $\displaystyle \frac{\delta H_{\rm{el}}}{\delta u_i} = - EI \delta_4 u_i$ and $\displaystyle \frac{\delta H_{\rm{el}}}{\delta v_i} = \frac{v_j}{\rho A}$ are essentially the discrete variational derivatives proposed by Furihata and Matsuo~\cite{FurihataBook}.

$f$, the magnitude of the interaction, is determined using an additional assumption.
The coupling of the target system seems not to include any dissipative component, therefore we suppose the total energy $H := H_{\rm{sp}} + H_{\rm{el}}$ to be conserved.
The time derivative of the total energy is
\begin{align*}
    \frac{\d H}{\d t}
    &=  \frac{\delta H}{\delta u_i} \frac{\d u_i}{\d t} \Delta x
        +   \frac{\delta H}{\delta v_i} \frac{\d v_i}{\d t} \Delta x
        +   \frac{\partial H}{\partial q_j} \frac{\d q_j}{\d t}
        +   \frac{\partial H}{\partial p_j} \frac{\d p_j}{\d t}
    \\
    &=  \frac{\delta H}{\delta u_i} \frac{\delta H}{\delta v_i} \Delta x
        +   \frac{\delta H}{\delta v_i} \left(- \frac{\delta H}{\delta u_i} + F_i\right) \Delta x
        +   \frac{\partial H}{\partial q_j} \frac{\partial H}{\partial p_j}
        +   \frac{\partial H}{\partial p_j} \left(- \frac{\partial H}{\partial q_j} + f_j\right)
    \\
    &=  \left( \frac{v_{n_b}}{\rho A} - \frac{p_1}{m_1} \right) f.
\end{align*}
Therefore, if $\displaystyle \frac{p_1(t)}{m_1} = \frac{v_{n_b}(t)}{\rho A}$ holds, the total energy will be conserved.
This sufficient condition is equivalent to
\begin{align}
    \frac{1}{\rho A} v_{n_b}(0) - \frac{1}{m_1} p_1(0) = 0 \label{eq:coupling-initial_cond},\\
    \frac{1}{\rho A} \frac{\d v_{n_b}}{\d t}(t) - \frac{1}{m_1} \frac{\d p_1}{\d t}(t) = 0 \label{eq:coupling-dt_cond}
    .
\end{align}
Substituting \eqref{eq:disc-sys} into \eqref{eq:coupling-dt_cond}, we can determine $f$:
\begin{gather*}
    \frac{1}{\rho A} \left(EI \delta_4 u_{n_b} - \frac{f}{\Delta x}\right)
    -   \frac{1}{m_1} (k(q_2 - q_1) + f) = 0
    \notag \\
    \iff
    f = - \frac{\rho A \Delta x m_1}{\rho A \Delta x + m_1} \left(
            \frac{EI}{\rho A} \delta_4 u_{n_b} + \frac{k}{m_1}(q_2 - q_1)
        \right)
    .
\end{gather*}

Now the entire semi-discretized coupled system is described.
However, the initial condition \eqref{eq:coupling-initial_cond} and  $f$ given by \eqref{eq:coupling-dt_cond} only guarantee the conservation of the total energy.
In other words, we should check the symplecticity of the coupled system using Theorem \ref{thm:main-result}.

\begin{figure}[t]
    \centering
    \includegraphics[width=0.6\linewidth]{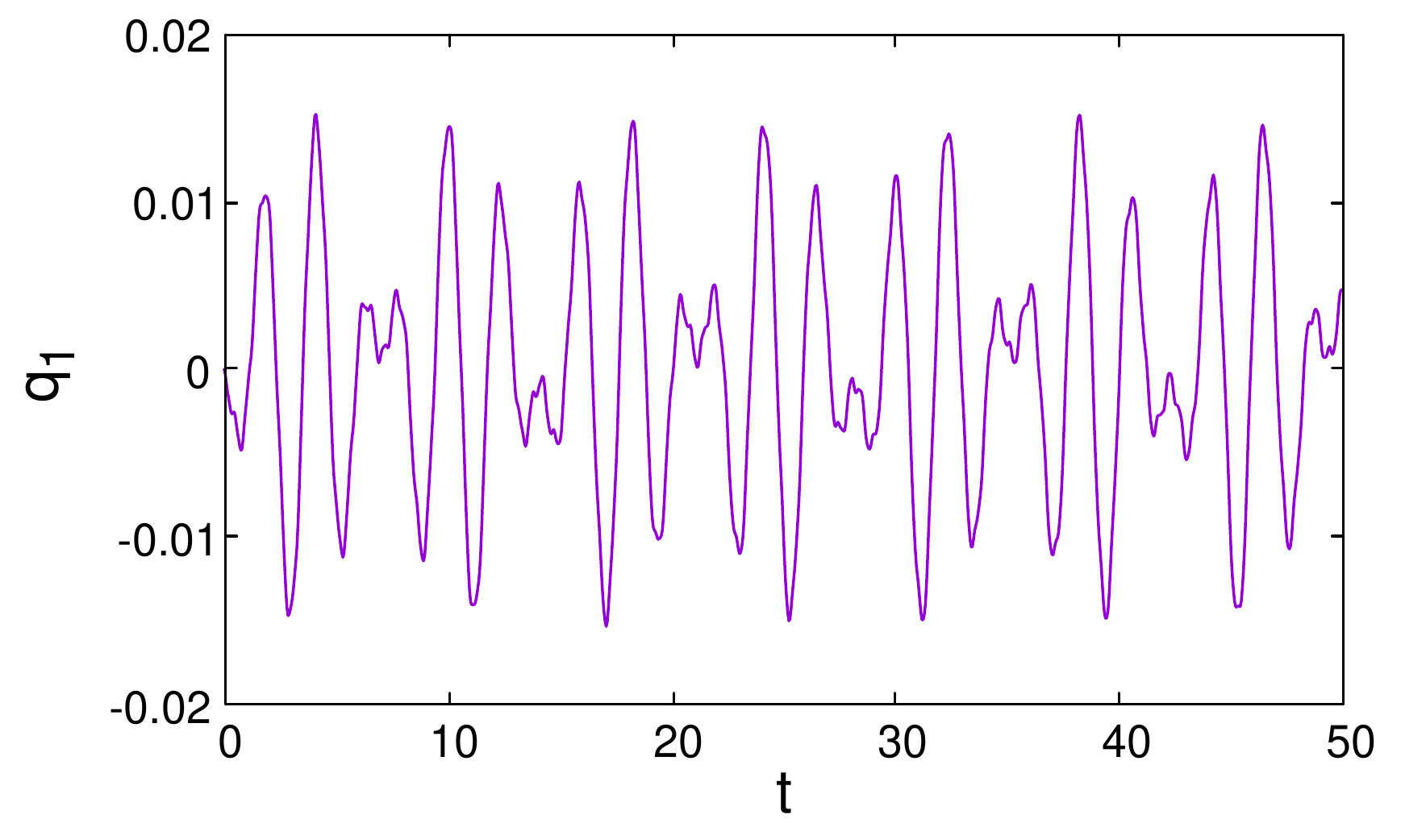}
    \caption{The displacement of the coupling point.}
    \label{fig:displacements}
\end{figure}

\begin{figure}[t]
    \centering
    \includegraphics[width=0.6\linewidth]{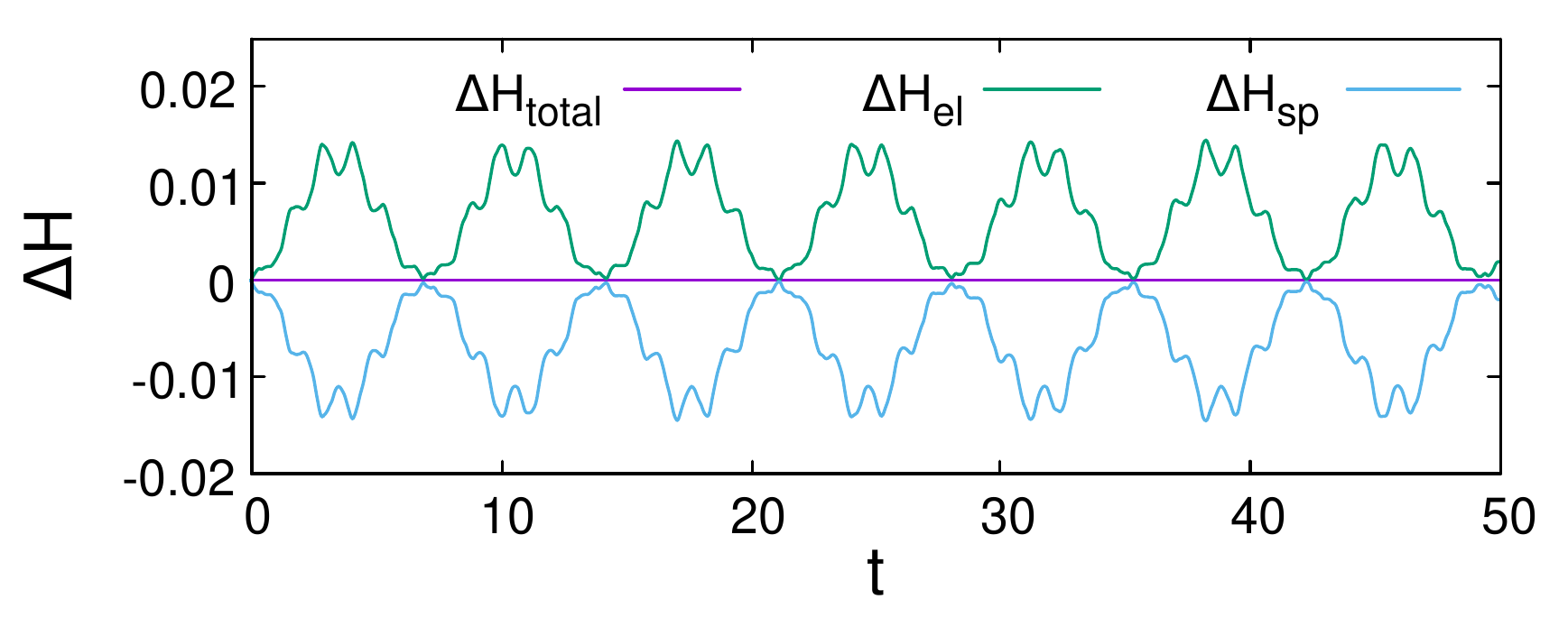}
    \caption{The energy variation of the subsystems and their sum.}
    \label{fig:energy-sum}
\end{figure}

The relationship between $\d u_{n_b}$ and $\d q_1$ is obtained by integrating and taking the exterior derivative of \eqref{eq:coupling-dt_cond}:
\begin{align*}
    \frac{1}{\rho A} v_{n_b} - \frac{1}{m_1} p_1 &= C \quad (C:\mathrm{constant}) \\
    u_{n_b} - q_1 &= C \\
    \d u_{n_b} - \d q_1 &= \d C = 0
    .
\end{align*}
Hence the condition of Theorem \ref{thm:main-result}
\begin{align*}
    \d f \wedge u_{n_b} - \d f \wedge \d q_1 = 0
\end{align*}
holds, and the coupled system is symplectic.

We conducted a numerical experiment with the S\"ormer--Verlet method 
under $L=1, N_x=51, b=0.2, n_b=10, T=50, N_t=10^5, \rho=10, A=E=I=1, m_1=m_2=0.1, k=0.5$. The spatial and temporal step sizes are $\Delta x = 2\times10^{-2}, \Delta t = 5\times10^{-4}$.
At $t=0$, the displacements of the beam and the coupled mass $m_1$ are set to 0, while $q_2$ is set to $-1.0$.

The results are shown in Figs. \ref{fig:displacements}--\ref{fig:long-energy}. 
Fig. \ref{fig:energy-sum} shows that the energy variations from its initial values for the subsystems ($\Delta H_{\mathrm{el}}$ and $\Delta H_{\mathrm{sp}}$) are complementary to each other, and the total energy $H$ is conserved.

The symplecticity can be confirmed by 
the order property of the modified Hamiltonian \cite{ReichBook} and the S\"ormer-Verlet method. 
To check the order property of the method, we conducted additional experiments with different $\Delta t$'s under fixed $T=500$.
Fig. \ref{fig:long-energy} shows the results of $\Delta t = 5\times10^{-4}$ and $\Delta t = 2.5\times10^{-4}$.
We can see that the variation of the total energy decreases in proportion to the square of the time step.
This proportionality reflects the fact that the method preserves the modified Hamiltonian in order 1, and that the symplectic integrators are valid for the system.

\begin{figure}[t]
    \centering
    \includegraphics[width=0.6\linewidth]{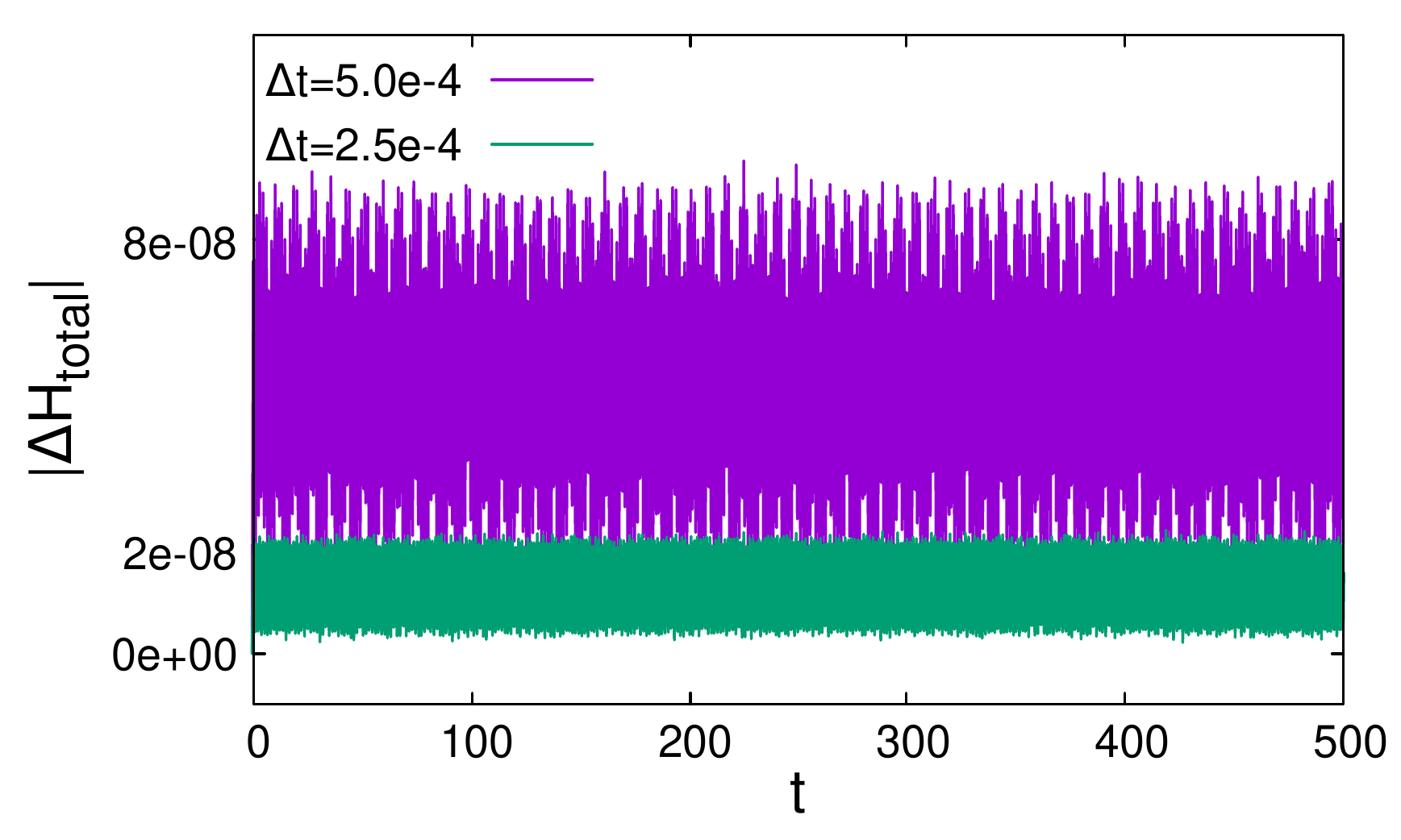}
    \caption{Variation of the total energy for different time steps. Halving the time step quarters the energy fluctuation; it shows the order property of the S\"ormer-Verlet method.}
    \label{fig:long-energy}
\end{figure}

\section{Conclusion}

We have proposed a condition under which a coupled system that consists of two Hamiltonian systems became locally a Hamiltonian system.
This result enables us to check the applicability of the symplectic integrators to complicated coupled systems.


As related work, 
port-Hamiltonian systems, which are an extension of Hamiltonian systems,  have been studied\cite{PHBook}.
It is known that the composition of a port-Hamiltonian system is also a port-Hamiltonian system; 
however, port-Hamiltonian systems are formulated by focusing on the Dirac structure rather than the conservation of symplectic forms, and the applicability of symplectic integrators is not well understood. 
The availability of symplectic integrators for such systems should be investigated in future work.

\section*{Acknowledgment}
This work is supported by JST CREST Grant Number JPMJCR1914.

\bibliographystyle{unsrtnat}






\end{document}